\documentclass[leqno,12pt,a4paper]{article}
\pagestyle{headings}
\usepackage[T1]{fontenc}
\usepackage[latin1]{inputenc}
\usepackage{amsmath,amsthm,amssymb}
\usepackage[left=3cm,right=3cm]{geometry}
\usepackage{makeidx}
\usepackage[english]{babel}
\usepackage{amsfonts}
\usepackage{hyperref}

\newtheorem{prop}{Proposition}
\newtheorem{theo}{Theorem}
\newtheorem{lem}{Lemma}
\newtheorem{cor}{Corollary}

\title{Non radial solutions for non homogeneous H\'enon equation}
\author{M. Badiale, G. Cappa}

\begin{document}


\maketitle
\textbf{Abstract:} In this paper we study a Hénon-like equation (see equations \eqref{eqhen} below), where the nonlinearity $f(t)$ is not homogeneous (i.e., it is not a power). By minimization on the Nehari manifold, we prove that for large values of the parameter $\alpha$ there is a breaking of symmetry and non radial solutions appears. This holds for sub- and super-critical growth of the nonlinearity $f$.
\\
\\
\textbf{AMS subject classification:} 35J20, 35J60
\\
\textbf{Keywords:} Hénon equation, Nehari manifold, semilinear elliptic equation
\\
\rule{10cm}{.1pt} \\
This research was partially supported by the PRIN2012 grant "Aspetti variazionali e perturbativi nei problemi differenziali nonlineari"\\
\rule{10cm}{.1pt} \\
Addresses of authors:\\
Marino Badiale \\
Dipartimento di Matematica, Università di Torino, via Carlo Alberto 10, 10123 Torino, Italy.\\
Email: marino.badiale@unito.it\\
\\
Gianluca Cappa \\
Dipartimento di Matematica e Informatica, Università di Parma, Parco Area delle Scienze 53/A, 43124 Parma, Italy.\\
Email: gianluca.cappa@nemo.unipr.it\\

\section{Introduction}
In this paper we study the following H\'enon-like equation
\begin{equation}
\label{eqhen}
   \left\{
       \begin{array}{ll}
                  -\Delta u=|x|^\alpha f(u) &\text{in}\ \Omega \\
                  u=0 &\text{on}\ \partial\Omega
       \end{array}
   \right.
\end{equation}
where $\Omega$ is the unity ball of $\mathbb{R}^n$ and $n\geq4$. From a well known result of Ni \cite{ni} it derives, assuming suitable hypotheses on $f$, that \eqref{eqhen} has a radial solution. In the case in which $f(t)$ is a power, say $f(t)=|t|^{p-2}t$, the problem is known as H\'enon's equation (see \cite{hen}). A seminal paper of Smets, Su and Willem \cite{smets} showed that, for large $\alpha$'s, there is a breaking of symmetry and a new, non radial, solution appears. After that, much work has been made to study these non radial solutions: multiplicity, shape, asymptotic behavior. To have just an idea of the research on this topic, one can see for example \cite{serr}, \cite{hira} and \cite{bad} for  results about the critical and supercritical cases, \cite{cao} and \cite{long} for the study of the asymptotic behavior of the maximum point and the existence of multi-peak solutions, \cite{lisj} for the uniqueness of the radial solution for $2<p<\frac{2n+2\alpha}{n-2}$, \cite{liya} and \cite{Kolon} for results about above the $p-$Laplacian, \cite{wangya} for the study of H\'enon type system. See also the references in the quoted papers.
\par \noindent At the best of our knowledge, all papers on non radial solutions of H\'enon equation deal with the case in which the non linearity is a power.
\par \noindent In this paper we prove a result of existence of a non radial solution, for large $\alpha$'s, in the case in which $f$ is not a power (but not too different from a power). Borrowing some ideas and some results from \cite{bad}, we also prove existence of non radial solutions for a range of growth of $f$ including supercritical growth. We will find the solutions as minima on the Nehari manifold of the functional usually associated to (\ref{eqhen}). So the main points of the present paper can be summarized as follows: non homogeneous nonlinearity, supercritical growth, Nehari manifold.

\bigskip

To write down our result, we first define $l= n/2$ and $p^*(n)= 2\frac{n+2}{n-2}$ if $n$ is even, $l= [n/2]+1$ and $p^*(n)= 2\frac{[n/2]+2}{[n/2]}$ if $n$ is odd.

We show that such problem admits a radial solution and a non radial solution under the following hypotheses on $f$:
\begin{description}
\item [$(f_1)$] $f$ is a H\"older continuous function (locally), $f(z)\geq0$ $\forall z>0$, $f(z)=\text{o}(z)$ for $z\rightarrow0$; moreover $\lim_{z\rightarrow+\infty}\frac{f(z)}{z}=+\infty$, $f(z)=0$ for all $z\leq 0$;
\item [$(f_2)$] $|f(z)|\leq C(1+|z|)^{p-1}$, where $2<p<p^*(n)$ for all $z$;
\item [$(f_3)$] there exist $q>2$ such that $q F(t)\leq tf(t)$ for all $t\in\mathbb{R}$, where $F(t)= \int_0^t f(s) ds$.
\item [$(f_4)$] there exist $\mu_1,\mu_2>2$ such that for all $t\in [0,1]$ and $v\geq 0$ we have $f(tv)\geq t^{\mu_1-1}f(v)$
  and for all $t\geq 1$ and $v\geq 0$ we have $f(tv)\geq t^{\mu_2-1}g(v)$ where $g(\cdot)$ is a non negative continuous function on $\mathbb{R}$ such that $g(0)=0$ and with
  \[4\frac{\mu_1-\mu_2}{(\mu_1-2)(\mu_2-2)}<n-l.\]
\end{description}

\textbf{Remarks.} Some examples of function that satisfy the hypothesis are $f(t)=t^{p-1}+t^{q-1}$ with $p<q$ and $\mu_1=\mu_2=q$, $f(t)=\frac{t^q}{1+t^{q-p}}$ or $f(t)=\min\{t^{p-1},t^{q-1}\}$ with $p=\mu_1$ and $q=\mu_2$ that satisfy the inequality between the exponent in $(f_4)$.

\medskip

\noindent To state our results we introduce the usual Sobolev space $ H_0^1(\Omega)$ and its subspace $H_{0,rad}^1(\Omega)$ of radial functions, that is

$$H_{0,rad}^1(\Omega)= \left\{ u\in H_0^1(\Omega)/\{0\}:\ u(x)=u(|x|) \right\}.$$

\noindent We the introduce the usual functional associated to problem (\ref{eqhen}), that is

$$I_{\alpha}(u)=\frac{1}{2}\int_{\Omega}|\nabla u|^2 dx-\int_{\Omega}|x|^\alpha F(u)\ dx,$$

 and the Nehari manifolds of the functional on $H_{0,rad}^1(\Omega)$:

$$N_{\alpha,r}=\left\{ u\in H_{0,rad}^1(\Omega)/\{0\}:\ \int_{\Omega}|\nabla u|^2 dx=\int_{\Omega}|x|^\alpha f(u)u\ dx\right\}.$$

\noindent

From the results of \cite{ni} it easily follows that $I_\alpha$ is a well defined $C^1$ functional on $H_{0,rad}^1(\Omega)$, for large $\alpha$'s, that is for $ \frac{2n+2\alpha}{n-2}\geq p^* (n)$. Also the following theorem is a particular consequence, suitable for our purposes, of  the results of \cite{ni} and \cite{will}.
\begin{theo}
Under the hypotheses $(f_1)$, $(f_2)$, $(f_3)$, $(f_4)$, for  $ \frac{2n+2\alpha}{n-2}\geq p^* (n)$, there is $u\in H_{0,rad}^1(\Omega)/\{0\}$, non-negative solution of \eqref{eqhen}, that realizes the minimum on the Nehari manifold $N_{\alpha,r}$ that is:
\[I_{\alpha}(u_\alpha)=m_{\alpha,r}=\min_{v\in N_{\alpha,r}}I_{\alpha}(v).\]
\end{theo}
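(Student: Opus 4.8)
The plan is to run the standard Nehari--manifold minimization, with Ni's weighted radial Sobolev embedding playing the role usually taken by the ordinary Sobolev embedding. First I would fix the functional setting. By the result of \cite{ni}, for radial functions the embedding $H^1_{0,rad}(\Omega)\hookrightarrow L^p(\Omega,|x|^\alpha\,dx)$ is continuous and compact whenever $2\le p<\frac{2n+2\alpha}{n-2}$. Since $(f_2)$ caps the growth of $f$ by $p<p^*(n)$ while the hypothesis reads $\frac{2n+2\alpha}{n-2}\ge p^*(n)$, the nonlinear term $\int_\Omega|x|^\alpha F(u)\,dx$ is finite, of class $C^1$, and compact on $H^1_{0,rad}(\Omega)$. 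Hence $I_\alpha\in C^1(H^1_{0,rad}(\Omega))$, as already noted in the text.

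Next I would describe the geometry of $N_{\alpha,r}$. From $f(z)=\mathrm{o}(z)$ at the origin and the superlinearity $f(z)/z\to+\infty$ at infinity in $(f_1)$, together with the monotonicity built into $(f_4)$, one shows that for each $u\in H^1_{0,rad}(\Omega)\setminus\{0\}$ the fibering map $t\mapsto I_\alpha(tu)$ has a global maximum at some $t(u)>0$, so that $t(u)u$ is the intersection of $N_{\alpha,r}$ with the ray $\mathbb{R}^+u$ and the projection $u\mapsto t(u)u$ is continuous. The same estimates, using $(f_2)$ and the weighted embedding, give a uniform bound $\|u\|^2\ge c_0>0$ on $N_{\alpha,r}$. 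Invoking $(f_3)$ one has, for $u\in N_{\alpha,r}$,
\[
I_\alpha(u)=I_\alpha(u)-\tfrac1q\langle I_\alpha'(u),u\rangle\ge\Big(\tfrac12-\tfrac1q\Big)\|u\|^2>0,
\]
since $q>2$; this bounds $I_\alpha$ from below on $N_{\alpha,r}$, shows $m_{\alpha,r}>0$, and proves that any minimizing sequence $(u_k)\subset N_{\alpha,r}$ is bounded in $H^1_{0,rad}(\Omega)$.

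The core of the argument, and the step I expect to be the main obstacle, is compactness. Passing to a subsequence, $u_k\rightharpoonup u$ in $H^1_{0,rad}(\Omega)$, and by the compactness of Ni's embedding $u_k\to u$ strongly in $L^p(\Omega,|x|^\alpha\,dx)$; the growth control $(f_2)$ then gives $\int_\Omega|x|^\alpha f(u_k)u_k\to\int_\Omega|x|^\alpha f(u)u$ and $\int_\Omega|x|^\alpha F(u_k)\to\int_\Omega|x|^\alpha F(u)$. The positive lower bound $m_{\alpha,r}>0$ forces $u\ne0$, so its Nehari projection $t(u)u$ is well defined. Combining weak lower semicontinuity of the norm with the strong convergence of the nonlinear integrals in the functional $v\mapsto I_\alpha(v)-\tfrac1q\langle I_\alpha'(v),v\rangle$ (whose integrand is nonnegative by $(f_3)$) yields $t(u)=1$, i.e.\ $u\in N_{\alpha,r}$, together with $I_\alpha(u)\le m_{\alpha,r}$, hence $I_\alpha(u)=m_{\alpha,r}$. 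The delicate point is that this compactness is genuinely available only within the radial class and only strictly below the weighted critical exponent $\frac{2n+2\alpha}{n-2}$, which is exactly why the hypothesis on $\alpha$ cannot be dropped.

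Finally I would turn the minimizer into a solution. Since the fibering projection is a continuous bijection between the radial unit sphere and $N_{\alpha,r}$ and $(f_3)$ holds, $N_{\alpha,r}$ is a natural constraint in the sense of the Nehari-manifold theory (cf.\ \cite{will}): the minimizer $u$ is automatically a critical point of $I_\alpha$ on $H^1_{0,rad}(\Omega)$, with no differentiability of $f$ needed beyond $I_\alpha\in C^1$. As \eqref{eqhen} is invariant under rotations, the principle of symmetric criticality upgrades this to a critical point of $I_\alpha$ on the whole of $H^1_0(\Omega)$, i.e.\ a weak solution. Testing the equation with $u^-=\max\{-u,0\}$ and using $f(z)=0$ for $z\le0$ from $(f_1)$ gives $\|u^-\|=0$, so $u\ge0$; the local H\"older continuity of $f$ and elliptic regularity then promote $u$ to a genuine solution. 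Renaming $u=u_\alpha$ completes the proof, the whole scheme being a specialization of the methods of \cite{ni} and \cite{will}.
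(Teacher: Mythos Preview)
The paper does not actually prove Theorem~1: the authors state it as ``a particular consequence, suitable for our purposes, of the results of \cite{ni} and \cite{will}'' and move on. Your proposal is a correct and faithful reconstruction of what those two citations supply---Ni's compact weighted embedding $H^1_{0,rad}(\Omega)\hookrightarrow L^p(\Omega,|x|^\alpha dx)$ for $p<\tfrac{2n+2\alpha}{n-2}$ to make $I_\alpha$ well defined and to recover compactness, combined with the standard Nehari/fibering-map minimization and the principle of symmetric criticality as in Willem's book---so there is nothing to compare beyond noting that your outline is exactly the argument the authors are invoking.
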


In this work we proof the following theorem.
\begin{theo}
Under the hypotheses $(f_1)$, $(f_2)$, $(f_3)$ and $(f_4)$ the problem \eqref{eqhen} admits a non radial solution (which is also non negative).
\end{theo}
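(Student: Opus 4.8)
The plan is to obtain the non-radial solution as a ground state on a Nehari manifold restricted to a class of \emph{partially} symmetric functions, and then to show that its energy lies strictly below the radial ground state energy $m_{\alpha,r}$ furnished by Theorem 1, so that the minimizer cannot be radial. Split $\mathbb{R}^n=\mathbb{R}^l\times\mathbb{R}^{n-l}$, write $x=(x',x'')$, and let $G=O(l)\times O(n-l)\subset O(n)$ act in the obvious way. Set
\[ H_G=\{u\in H^1_0(\Omega):\ u\circ g=u\ \ \forall g\in G\}, \]
so that $u\in H_G$ depends only on $(|x'|,|x''|)$; this class contains all radial functions but also genuinely non-radial ones. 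The first task is to establish, borrowing the weighted embedding ideas of \cite{bad}, that for $\frac{2n+2\alpha}{n-2}\geq p^*(n)$ the embedding of $H_G$ into $L^p(\Omega,|x|^\alpha\,dx)$ is compact for $2<p<p^*(n)$; here both factors of $G$ have dimension $\geq2$ (for $n\geq4$ with the stated choice of $l$), which is exactly what keeps the symmetric embedding subcritical in this range. Consequently $I_\alpha$ is a well-defined $C^1$ functional on $H_G$, and by the principle of symmetric criticality the critical points of $I_\alpha|_{H_G}$ solve \eqref{eqhen}.

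Next I would minimize on the symmetric Nehari manifold
\[ N_{\alpha,G}=\Big\{u\in H_G\setminus\{0\}:\ \int_\Omega|\nabla u|^2\,dx=\int_\Omega|x|^\alpha f(u)u\,dx\Big\},\qquad m_{\alpha,G}=\inf_{N_{\alpha,G}}I_\alpha. \]
Under $(f_1)$--$(f_3)$ the map $t\mapsto I_\alpha(tu)$ has, for each $u\neq0$, a unique positive maximum point, so $N_{\alpha,G}$ is a natural $C^1$ constraint and $m_{\alpha,G}>0$; the superquadraticity condition $(f_3)$ bounds minimizing sequences in $H_G$, and the compact embedding of the first step produces a minimizer $u_\alpha\in N_{\alpha,G}$ with $I_\alpha(u_\alpha)=m_{\alpha,G}$. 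Since $f(z)=0$ for $z\leq0$, testing the equation with the negative part $u_\alpha^-$ forces $u_\alpha\geq0$. Thus $u_\alpha$ is already a non-negative solution of \eqref{eqhen}, and it remains only to show that it is not radial.

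The decisive step is the strict inequality $m_{\alpha,G}<m_{\alpha,r}$ for $\alpha$ large. Since every radial function lies in $H_G$, once this is proved a radial minimizer would have energy $\geq m_{\alpha,r}>m_{\alpha,G}=I_\alpha(u_\alpha)$, a contradiction, so $u_\alpha$ must be non-radial. To obtain the inequality I would, on one side, construct a $G$-symmetric test function concentrating near the boundary sphere but on a set of lower dimension than a full sphere (exploiting that $|x|^\alpha\approx1$ near $\partial\Omega$, and the extra room available to $G$-symmetric, as opposed to fully radial, functions) to produce an upper bound for $m_{\alpha,G}$; on the other side I would bound $m_{\alpha,r}$ from below. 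Here the homogeneity-type estimates of $(f_4)$ are essential: the inequalities $f(tv)\geq t^{\mu_1-1}f(v)$ and $f(tv)\geq t^{\mu_2-1}g(v)$ let one replace $F$ and $f(u)u$ by genuine powers, reducing both bounds to computable weighted integrals, and the arithmetic condition $4\frac{\mu_1-\mu_2}{(\mu_1-2)(\mu_2-2)}<n-l$ is precisely what guarantees that the upper and lower estimates separate as $\alpha\to+\infty$.

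The main obstacle is exactly this energy comparison. For a pure power the comparison is governed by clean scaling (the Smets--Su--Willem mechanism), but the non-homogeneous $f$ destroys exact scaling, so the two-sided power control of $(f_4)$ must be tracked carefully through both the test-function estimate and the radial lower bound. Producing the symmetric test function with the correct concentration rate, and verifying that it genuinely beats the radial ground state for all sufficiently large $\alpha$ with the gap dictated by the condition on $\mu_1,\mu_2,n-l$, is the crux of the argument.
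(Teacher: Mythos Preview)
Your proposal is correct and follows essentially the same route as the paper: work in the space $H_G=H_l$ of $O(l)\times O(n-l)$--invariant functions, use the compact weighted embedding from \cite{bad} to obtain a minimizer on the corresponding Nehari manifold, and then compare levels by proving $m_{\alpha,r}\geq C\alpha^{\frac{\mu_1+2}{\mu_1-2}}$ and $m_{\alpha,G}\leq C\alpha^{\frac{\mu_2+2}{\mu_2-2}-(n-l)}$, with $(f_4)$ supplying the power-type bounds that make these exponents separate. The only small overstatement is the claimed \emph{uniqueness} of the fibering maximum under $(f_1)$--$(f_3)$ alone; the paper does not assert this and the argument only needs existence of a projection onto the Nehari set, which is what both you and the paper actually use.
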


For future use let us notice that from hypothesis $(f_4)$ it follows that
\[\forall t\in(0,1),\ v>0\ \text{results}\ F(tv)\geq t^{\mu_1}F(v)\]
\[\forall t>1,\ v>0\ \text{results}\ F(tv)\geq t^{\mu_2}G(v)\]
with $G(v):=\int_0^v g(t) dt$.

\noindent The rest  of this paper is devoted to the proof of Theorem 2. As usual in the study of H\'enon equation, to get the proof we first estimate the "radial critical level" $m_{\alpha,r}$, then we estimate other critical levels and we show that, for large $\alpha$'s, they are distinct.

\section{Estimate of $m_{\alpha,r}$}

We have defined
\[m_{\alpha,r}=\inf_{u\in N_{\alpha,r}}I_\alpha(u). \]

We prove the following proposition:

\begin{prop}
\label{proprad}There exists $C>0$ such that
\[m_{\alpha,r}\geq C\ \alpha^\frac{\mu_1+2}{\mu_1-2}\]
for $\alpha\rightarrow+\infty$.
\end{prop}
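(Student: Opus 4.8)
The plan is to reduce the estimate to a lower bound on $\|\nabla u\|_2^2$ over the Nehari manifold, and then to exploit the concentration of the weight $|x|^\alpha$ near $\partial\Omega$ together with a radial Sobolev (Ni--Strauss) pointwise bound. First, for $u\in N_{\alpha,r}$ I would use $(f_3)$: since $\langle I_\alpha'(u),u\rangle=0$ on the Nehari manifold, one has
\[
I_\alpha(u)=I_\alpha(u)-\tfrac1q\langle I_\alpha'(u),u\rangle=\Big(\tfrac12-\tfrac1q\Big)\int_\Omega|\nabla u|^2\,dx+\int_\Omega|x|^\alpha\Big(\tfrac1q f(u)u-F(u)\Big)\,dx\ge\Big(\tfrac12-\tfrac1q\Big)\int_\Omega|\nabla u|^2\,dx,
\]
because $qF(t)\le tf(t)$ and $q>2$. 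Hence it suffices to prove $\inf_{N_{\alpha,r}}\|\nabla u\|_2^2\ge C\alpha^{\frac{\mu_1+2}{\mu_1-2}}$, and by the Nehari identity $\|\nabla u\|_2^2=\int_\Omega|x|^\alpha f(u)u\,dx$ this amounts to an upper bound on the weighted nonlinear term in terms of $t:=\|\nabla u\|_2$.

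The key pointwise ingredient I would extract from $(f_4)$ is an upper bound at infinity: choosing $t=1/v$ in the first inequality of $(f_4)$ gives $f(v)\le f(1)\,v^{\mu_1-1}$ for $v\ge1$, while $(f_1)$ (with $f(z)=\mathrm{o}(z)$ at $0$ and local continuity) controls $f$ on $[0,1]$; together these yield $f(u)u\le C(u^2+u^{\mu_1})$ for all $u\ge0$. For the radial side, for $u\in H^1_{0,rad}(\Omega)$ I would write $u(r)=-\int_r^1 u'(s)\,ds$ and apply Cauchy--Schwarz to obtain, for $r\in[\tfrac12,1]$, the boundary-layer estimate $|u(r)|\le C\sqrt{1-r}\,t$, together with the global bound $|u(r)|\le C\,r^{-(n-2)/2}t$.

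Writing $\int_\Omega|x|^\alpha u^s\,dx=\omega_{n-1}\int_0^1 r^{\alpha+n-1}u(r)^s\,dr$, I would split the integral at $r=\tfrac12$. On $[0,\tfrac12]$ the factor $r^{\alpha+n-1}$ is exponentially small in $\alpha$, so with the global bound this piece is negligible. On $[\tfrac12,1]$, inserting $|u(r)|\le C\sqrt{1-r}\,t$ leaves $\int_{1/2}^1 r^{\alpha+n-1}(1-r)^{s/2}\,dr$, which by the Beta-function asymptotics $\int_0^1 r^{\alpha+n-1}(1-r)^{s/2}\,dr=B(\alpha+n,\tfrac s2+1)\sim C\,\alpha^{-(s/2+1)}$ behaves like $C\alpha^{-(s+2)/2}$. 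Hence $\int_\Omega|x|^\alpha u^s\,dx\le C\,t^s\,\alpha^{-(s+2)/2}$ for $s=2$ and $s=\mu_1$. Plugging this into the Nehari identity gives
\[
t^2\le C\big(t^2\alpha^{-2}+t^{\mu_1}\alpha^{-(\mu_1+2)/2}\big).
\]
For $\alpha$ large the first term is absorbed into the left-hand side, so $t^2\le C\,t^{\mu_1}\alpha^{-(\mu_1+2)/2}$, i.e. $t^{\mu_1-2}\ge c\,\alpha^{(\mu_1+2)/2}$, which yields $\|\nabla u\|_2^2=t^2\ge c\,\alpha^{\frac{\mu_1+2}{\mu_1-2}}$ and, combined with the first step, the claimed bound on $m_{\alpha,r}$.

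The main obstacle is the sharp weighted estimate of the third step: the crude global bound $|u|\le C r^{-(n-2)/2}t$ alone only produces the exponent $\alpha^{2/(\mu_1-2)}$, and it is precisely the boundary-layer refinement $|u(r)|\le C\sqrt{1-r}\,t$, combined with the concentration of $r^{\alpha+n-1}$ in a layer of width $\sim 1/\alpha$, that supplies the extra factor $\alpha^{-\mu_1/2}$ and hence the correct exponent $\frac{\mu_1+2}{\mu_1-2}$. Care is also needed to verify that the quadratic term is genuinely absorbed (this is where largeness of $\alpha$ enters) and that the off-boundary contribution is controlled uniformly over the Nehari manifold.
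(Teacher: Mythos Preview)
Your argument is correct and gives the sharp exponent $\frac{\mu_1+2}{\mu_1-2}$, but the route is genuinely different from the paper's. The paper follows the Smets--Su--Willem strategy: it introduces the change of variables $v_\alpha(\rho)=u_\alpha(\rho^\beta)$ with $\beta=\frac{n}{\alpha+n}$, which converts $I_\alpha$ into a functional $J_\beta$ on a weighted radial space with weight $|x|^{-\gamma}$, $\gamma=(n-2)(1-\beta)$. It then compares $J_\beta$ with a fixed limiting functional $J$ (weight $|x|^{-a}$, $a=\tfrac12(n-2)$), proves the corresponding Nehari level $m'$ is positive via a weighted Sobolev inequality (their Lemma~2/Corollary~1), and shows $m_\beta\ge m'/2$ (Lemma~4). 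Finally, it locates $t_\alpha\in(0,1)$ with $t_\alpha v_\alpha\in M_\beta$, bounds $t_\alpha\le\beta^{2/(\mu_1-2)}$ via $(f_4)$, and deduces $m'/2\le J_\beta(t_\alpha v_\alpha)\le C\beta^{\frac{\mu_1+2}{\mu_1-2}}m_{\alpha,r}$.

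Your approach bypasses all of the auxiliary weighted functionals and Nehari manifolds. The crucial observation you make that the paper does not is the boundary-layer refinement $|u(r)|\le C\sqrt{1-r}\,\|\nabla u\|_2$ on $[\tfrac12,1]$, which, combined with the Beta asymptotics $\int_0^1 r^{\alpha+n-1}(1-r)^{s/2}\,dr\sim C\alpha^{-(s+2)/2}$, captures directly the concentration of the weight in a layer of width $\sim 1/\alpha$. Both methods extract the upper growth bound $f(v)\le f(1)v^{\mu_1-1}$ from the first inequality in $(f_4)$ (taking $t=1/v$). Your proof is more elementary and self-contained; the paper's change-of-variables framework is closer to the existing literature and perhaps transports more readily to situations where such explicit pointwise estimates are not available.
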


\noindent In order to prove (\ref{proprad}) we need to introduce some preliminary concept. We remark that, thanks to the results in \cite{ni}, $H_{0,rad}^1(\Omega)\hookrightarrow L^s(\Omega,|x|^\alpha dx)$ for $s<\frac{2(n+\alpha)}{n-2}$ with compact embedding. Define now $a=\frac{1}{2}(n-2)$ and let $u\in H_{0,rad}^1(\Omega)$ be such that
\[\int_{\Omega}|\nabla u|^2 |x|^{-a}dx<+\infty.\]
If $b<a$ then we have $|x|^{-b}<|x|^{-a}$, because $|x|<1$ in $\Omega$, hence

\[\int_{\Omega}|\nabla u|^2 |x|^{-b}dx\leq\int_{\Omega}|\nabla u|^2 |x|^{-a}dx<+\infty.\]
If we extend $u$ setting $u=0$ on $\mathbb{R}^n/\Omega$, we have of course $u\in H_{0,rad}^1(\mathbb{R}^n)$ and
\[\int_{\mathbb{R}^n}|\nabla u|^2 |x|^{-b}dx\leq\int_{\Omega}|\nabla u|^2 |x|^{-a}dx<+\infty.\]

\noindent We now need the following Lemma, which is a particular case of \textbf{Lemma 2.1} of \cite{wang}.

\begin{lem}
Let $\beta \in \mathbb{R}$ be such that $2<n+\beta$. Let $H= H_{0,rad}^1(\mathbb{R}^n,|x|^\beta dx)$ the completion of $C_{0,r}^{\infty}(\mathbb{R}^n )$ under

$$ ||u||_{r,\beta }:= \left(  \int_{\mathbb{R}^n}|x|^\beta |\nabla u|^2 dx \right)^{1/2}.$$

\noindent Then there is $C>0$ such that for all $u \in H$ and for a.e.$x \in \mathbb{R}^n$ it holds

$$  |u(x)| \leq C |x|^{-\frac{n+\beta-2}{2}}.$$

\end{lem}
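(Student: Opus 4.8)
The plan is to prove the estimate first for radial test functions $u \in C_{0,r}^\infty(\mathbb{R}^n)$ and then extend it to all of $H$ by density. Writing $u(x)=u(r)$ with $r=|x|$, and using $|\nabla u|=|u'(r)|$ for radial $u$, the weighted norm becomes in polar coordinates $\|u\|_{r,\beta}^2 = \omega_n \int_0^\infty |u'(s)|^2 s^{n+\beta-1}\,ds$, where $\omega_n$ is the surface measure of the unit sphere in $\mathbb{R}^n$. Since such $u$ has compact support, the fundamental theorem of calculus gives, for every $r>0$, the representation $u(r)=-\int_r^\infty u'(s)\,ds$, whence $|u(r)|\le \int_r^\infty |u'(s)|\,ds$.

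First I would insert the weight and apply the Cauchy--Schwarz inequality:
\begin{equation*}
|u(r)| \le \int_r^\infty |u'(s)|\, s^{\frac{n+\beta-1}{2}}\, s^{-\frac{n+\beta-1}{2}}\, ds \le \left(\int_r^\infty |u'(s)|^2 s^{n+\beta-1}\,ds\right)^{1/2}\left(\int_r^\infty s^{-(n+\beta-1)}\,ds\right)^{1/2}.
\end{equation*}
The first factor is controlled by $\omega_n^{-1/2}\|u\|_{r,\beta}$. The decisive point is the second factor: the integral $\int_r^\infty s^{-(n+\beta-1)}\,ds$ converges precisely when $n+\beta-1>1$, that is when $2<n+\beta$, which is exactly the hypothesis on $\beta$. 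Computing it yields $\int_r^\infty s^{-(n+\beta-1)}\,ds = \frac{r^{-(n+\beta-2)}}{n+\beta-2}$, so that
\begin{equation*}
|u(r)| \le \frac{1}{\sqrt{\omega_n(n+\beta-2)}}\,\|u\|_{r,\beta}\; r^{-\frac{n+\beta-2}{2}}.
\end{equation*}
This is the claimed pointwise bound (with the constant made explicit and carrying the factor $\|u\|_{r,\beta}$, which is the precise form of the estimate) for every $u\in C_{0,r}^\infty(\mathbb{R}^n)$.

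It remains to extend the inequality from the dense subspace $C_{0,r}^\infty(\mathbb{R}^n)$ to the whole completion $H$, and this is the step I expect to require the most care. Given $u\in H$, choose $u_k\in C_{0,r}^\infty(\mathbb{R}^n)$ with $\|u_k-u\|_{r,\beta}\to 0$. Applying the bound just proved to the differences $u_k-u_j$ shows that $(u_k)$ is Cauchy, and in fact uniformly convergent on each region $\{|x|\ge\rho\}$ with $\rho>0$; hence $u_k$ converges pointwise away from the origin to a representative of $u$, and passing to the limit in the pointwise inequality preserves it for almost every $x$. Since the constant $\frac{1}{\sqrt{\omega_n(n+\beta-2)}}$ is independent of $u$, this yields the stated estimate with $C=\frac{\|u\|_{r,\beta}}{\sqrt{\omega_n(n+\beta-2)}}$. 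The only genuinely delicate issues are the identification of the pointwise limit with the element $u\in H$ and the justification of the a.e.\ statement, both of which follow from the local uniform convergence on $\{|x|\ge\rho\}$.
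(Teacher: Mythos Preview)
Your argument is correct. The paper does not actually prove this lemma; it simply quotes it as a particular case of Lemma~2.1 in Su--Wang \cite{wang}. Your route---fundamental theorem of calculus for a compactly supported radial profile, Cauchy--Schwarz with the weight $s^{(n+\beta-1)/2}$, and the observation that the hypothesis $n+\beta>2$ is exactly the integrability condition for $\int_r^\infty s^{-(n+\beta-1)}\,ds$---is the standard elementary proof and is fully self-contained, with the bonus of an explicit constant. You also rightly include the factor $\|u\|_{r,\beta}$ on the right-hand side: the paper's statement omits it (which, as written, cannot be true by scaling), but the intended form with the norm is precisely what is used immediately after the lemma. Your density step is handled carefully; the local uniform convergence on $\{|x|\ge\rho\}$ indeed identifies the pointwise limit with the abstract element of the completion and justifies the a.e.\ inequality.
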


\bigskip

\noindent Let us now choose $a,b$ as above. We now apply the previous lemma for $\beta=-b$ and for $u\in H_{0,rad}^1(\Omega)$, setting $u=0$ in $\mathbb{R}^n \backslash \Omega$. We get that there exist a constant $C=C_b>0$ such that, for a.e.$x \in \mathbb{R}^n$, it holds
\begin{eqnarray*}
|u(x)|&\leq& C |x|^{-\frac{n-b-2}{2}}\left(\int_{\mathbb{R}^n}|\nabla u|^2 |x|^{-b}dx\right)^{\frac{1}{2}}=\\
      &=& C |x|^{-\frac{n-b-2}{2}}\left(\int_{\Omega}|\nabla u|^2 |x|^{-b}dx\right)^{\frac{1}{2}}\leq\\
      &\leq& C |x|^{-\frac{n-b-2}{2}}\left(\int_{\Omega}|\nabla u|^2 |x|^{-a}dx\right)^{\frac{1}{2}}.
\end{eqnarray*}
\bigskip

\noindent We now prove the following lemma:
\begin{lem}
Let $u\in H_{0,rad}^1(\Omega)$ be such that $\int_{\Omega}|\nabla u|^2 |x|^{-a}dx<+\infty$ and $2<q<\frac{4n}{n-2}$. If $b=n-2-\frac{2n}{q}$ then exist $C=C_b>0$ such that
\[\left(\int_{\Omega}|u|^{q}dx\right)^{\frac{2}{q}}\leq C_b \int_{\Omega}|\nabla u|^2 |x|^{-b}dx\]
\end{lem}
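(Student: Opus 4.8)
The plan is to combine the pointwise decay bound furnished by the preceding lemma with a radial weighted Hardy inequality. Write $A^2 := \int_\Omega |\nabla u|^2 |x|^{-b}\,dx$, which is finite since $b<a$ (this is exactly the content of $q<\frac{4n}{n-2}$) and $|x|<1$ on $\Omega$, so that $\int_\Omega|\nabla u|^2|x|^{-b}\,dx\le \int_\Omega|\nabla u|^2|x|^{-a}\,dx<+\infty$. From the estimate established just above we have the pointwise bound $|u(x)|\le C|x|^{-\frac{n-b-2}{2}}A$ for a.e. $x$. The idea is to factor $|u|^q=|u|^{q-2}\,u^2$, use this bound on the factor $|u|^{q-2}$, and reduce the remaining $\int_\Omega |x|^{-\gamma}u^2\,dx$ to a weighted Hardy inequality.

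First I would insert the pointwise bound to get
\[\int_\Omega|u|^q\,dx\le C^{q-2}A^{q-2}\int_\Omega |x|^{-\frac{(n-b-2)(q-2)}{2}}\,u^2\,dx.\]
The crucial bookkeeping is the identity of exponents: with $b=n-2-\frac{2n}{q}$ one has $n-b-2=\frac{2n}{q}$, hence $\frac{(n-b-2)(q-2)}{2}=\frac{n(q-2)}{q}=n-\frac{2n}{q}=b+2$. Thus the remaining integral is $\int_\Omega |x|^{-(b+2)}u^2\,dx$, and it suffices to prove the weighted Hardy inequality
\[\int_\Omega |x|^{-(b+2)}u^2\,dx\le C_b\int_\Omega|\nabla u|^2|x|^{-b}\,dx.\]

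To establish this, I would extend $u$ by zero to $H^1_{0,rad}(\mathbb R^n)$ and pass to the radial variable $r=|x|$. Setting $s:=n-b-2=\frac{2n}{q}>0$ and writing $\omega_{n-1}$ for the surface measure of the unit sphere, the inequality reduces to the one-dimensional statement $\int_0^\infty u^2 r^{s-1}\,dr\le \frac{4}{s^2}\int_0^\infty (u')^2 r^{s+1}\,dr$, which follows (first for $u\in C_{0,r}^\infty$, then by density) from integrating $\int_0^\infty u^2 (r^s)'\,dr$ by parts and applying Cauchy--Schwarz; the positivity $s>0$ (equivalently $b<n-2$, automatic here) is exactly what makes the boundary terms vanish and the constant finite.

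Combining the two displays gives $\int_\Omega|u|^q\,dx\le C^{q-2}A^{q-2}\cdot C_b A^2=C'A^q$, and raising to the power $2/q$ yields the claim. I expect the only genuinely delicate point to be the exponent identity $\frac{(n-b-2)(q-2)}{2}=b+2$ together with the verification that the choice of $b$ keeps $s=n-b-2$ strictly positive, so that the Hardy step is legitimate; the remaining manipulations are routine.
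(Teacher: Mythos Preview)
Your argument is correct and uses the same two ingredients as the paper---the pointwise decay $|u(x)|\le C|x|^{-(n-b-2)/2}A$ applied to $q-2$ copies of $u$, together with an integration-by-parts/Cauchy--Schwarz step---but you organize them differently. The paper first integrates $\int_0^1 r^{n-1}|u|^q\,dr$ by parts to introduce $u'$, applies Cauchy--Schwarz, and only then inserts the pointwise bound; this produces a self-referential inequality $\int_\Omega|u|^q\le C\bigl(\int_\Omega|x|^{-b}|\nabla u|^2\bigr)^{q/4}\bigl(\int_\Omega|u|^q\bigr)^{1/2}$ that is then solved for $\int_\Omega|u|^q$. You instead insert the pointwise bound first, reducing to the weighted Hardy inequality $\int_\Omega|x|^{-(b+2)}u^2\le C_b\int_\Omega|x|^{-b}|\nabla u|^2$, which you prove separately. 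The exponent bookkeeping is identical: the paper's relation $2+b-(q-2)\frac{n-b-2}{2}=0$ is exactly your $\frac{(n-b-2)(q-2)}{2}=b+2$. Your route has the advantage of isolating the Hardy step as a clean standalone inequality (and the density argument for $C_{0,r}^\infty$ handles the boundary term at $r=0$ legitimately); the paper's route avoids naming Hardy but ends up doing the same work inside a single chain of estimates.
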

\begin{proof}
First we notice that $q<\frac{4n}{n-2}$ implies $b<\frac{1}{2}(n-2)$, hence $2<n-b$. Also we have $q=\frac{2n}{n-2-b}$. Integrating by parts we get
\begin{eqnarray*}
\int_{\Omega}|u|^{q}dx&=& \omega_n \int_0^1 r^{n-1}|u(r)|^q dr= \\
                      &=&\omega_n \frac{1}{n}r^n|u(r)|^q\bigg|^{r=1}_{r=0}-\frac{q \omega_n}{n}\int_0^1 r^{n}|u(r)|^{q-2}u(r)u'(r)dr.
\end{eqnarray*}

Here $\omega_n$ is the measure of the surface of the unit ball. As $u(1)=0$, we have
\[\frac{1}{n}r^n|u(r)|^q\bigg|^{r=1}_{r=0}=-\lim_{r\rightarrow0}\frac{1}{n}r^n|u(r)|^q\leq0\]
then
\begin{eqnarray*}
\int_{\Omega}|u|^{q}dx&\leq&-\frac{q\omega_n}{n}\int_0^1 r^{n}|u(r)|^{q-2}u(r)u'(r)dr\leq \\
                      &\leq& \frac{2\omega_n}{n-b-2}\int_0^1 r^{n}|u(r)|^{q-1}|u'(r)|dr =\\
                      &=& \frac{2\omega_n}{n-b-2}\int_0^1 r^{n-\frac{n-b-1}{2}}|u(r)|^{q-1}|u'(r)|r^{\frac{n-b-1}{2}}dr \leq\\
                      &\leq& \frac{2\omega_n}{n-b-2}\left( \int_0^1 r^{-b}|u'(r)|^2 r^{n-1}dr\right)^{\frac{1}{2}}\left(\int_0^1
                              r^{n+1+b}|u(r)|^{2(q-1)}dr\right)^{\frac{1}{2}}.
\end{eqnarray*}
Using the previous lemma we get
\begin{eqnarray*}
\int_0^1r^{n+1+b}|u(r)|^{2(q-1)}dr&=&\int_0^1 r^{n-1}|u(r)|^q \, r^{2+b} |u(r)|^{q-2} dr \leq \\
        &\leq& C^{q-2} \int _0^1 r^{n-1}|u(r)|^q \, r^{2+b}\left(r^{-\frac{n-b-2}{2}}\right)^{q-2}dr\\
        &\phantom{=}&\left(\int_{\Omega}|x|^{-b}|\nabla u|^2 dx\right)^{\frac{q-2}{2}}dr\leq\\
        &\leq&C^{q-2} \left(\int_{\Omega}|x|^{-b}|\nabla u|^2 dx\right)^{\frac{q-2}{2}}\int_0^1 r^{n-1}|u(r)|^q dr
\end{eqnarray*}
because
\begin{eqnarray*}
2+b-(q-2)\left(\frac{n-b-2}{2}\right)=0.
\end{eqnarray*}
Recalling that $b< \frac{1}{2}(n-2)$, we have $\frac{1}{n-b-2}< \frac{2}{n-2}$, hence we get
\begin{eqnarray*}
\int_{\Omega}|u|^{q}dx&\leq& C \left(\int_{\Omega}|x|^{-b}|\nabla u|^2 dx\right)^{\frac{q-2}{4}}
                             \left(\int_0^1 r^{n-1}|u(r)|^q dr\right)^{\frac{1}{2}}\\
                &\phantom{=}&\left(\int_0^1 r^{-b}|u'(r)|^2 r^{n-1}dr\right)^{\frac{1}{2}}=\\
                &=&C \left(\int_{\Omega}|x|^{-b}|\nabla u|^2 dx\right)^{\frac{q}{4}}\left(\int_{\Omega}|u|^{q}dx\right)^{\frac{1}{2}},
\end{eqnarray*}
where $C$ is a constant independent from $u$ and $b$, which may change from line to line. We then get
\[\left(\int_{\Omega}|u|^{q}dx\right)^{\frac{1}{2}}\leq C \left(\int_{\Omega}|x|^{-b}|\nabla u|^2 dx\right)^{\frac{q}{4}}\]
that is
\[\left(\int_{\Omega}|u|^{q}dx\right)^{\frac{1}{q}}\leq C \left(\int_{\Omega}|x|^{-b}|\nabla u|^2 dx\right)^{\frac{1}{2}}.\]
\end{proof}

\bigskip

\noindent From this we easily get the following corollary.

\begin{cor}
Let be $u\in H_{0,rad}^1(\Omega)$ such that $\int_{\Omega}|x|^{-a}|\nabla u|^2 dx<+\infty$. If $2<q<\frac{4n}{n-2}$ then exist $C>0$ such that
\[\left(\int_{\Omega}|u|^{q}dx\right)^{\frac{1}{q}}\leq C\left(\int_{\Omega}|x|^{-a}|\nabla u|^2 dx\right)^{\frac{1}{2}}.\]
\end{cor}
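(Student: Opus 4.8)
The plan is to deduce the corollary from the preceding Lemma by a single weight comparison, exploiting that $\Omega$ is the unit ball so that $|x|<1$ throughout. The whole content is that the weight $|x|^{-b}$ appearing in the Lemma can be dominated by the fixed weight $|x|^{-a}$, provided $b\le a$, and that the range of $q$ in the hypothesis is exactly what makes this happen.

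First I would record the arithmetic linking the two exponents. Setting $b=n-2-\frac{2n}{q}$ as in the Lemma, the inequality $b<a=\frac{1}{2}(n-2)$ is equivalent, after rearranging, to $\frac{n-2}{2}<\frac{2n}{q}$, i.e. to $q<\frac{4n}{n-2}$. Thus the hypothesis $2<q<\frac{4n}{n-2}$ of the corollary is precisely what guarantees $b<a$ (and, as already noted inside the Lemma, also $2<n-b$, so the Lemma is applicable for this value of $b$).

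Next I would compare the two weighted Dirichlet integrals. Since $|x|<1$ on $\Omega$ and $b<a$, we have $|x|^{-b}\le|x|^{-a}$ pointwise, exactly as observed at the start of this section. Hence
\[\int_{\Omega}|x|^{-b}|\nabla u|^2\,dx\le\int_{\Omega}|x|^{-a}|\nabla u|^2\,dx<+\infty,\]
so the assumption $\int_{\Omega}|x|^{-a}|\nabla u|^2\,dx<+\infty$ of the corollary in particular ensures that $u$ satisfies the hypothesis of the preceding Lemma for this $b$.

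Finally I would invoke the Lemma and chain the two inequalities: the Lemma gives $\left(\int_{\Omega}|u|^{q}\,dx\right)^{1/q}\le C\left(\int_{\Omega}|x|^{-b}|\nabla u|^2\,dx\right)^{1/2}$, and combining this with the weight comparison above yields
\[\left(\int_{\Omega}|u|^{q}\,dx\right)^{1/q}\le C\left(\int_{\Omega}|x|^{-a}|\nabla u|^2\,dx\right)^{1/2},\]
which is the claim. There is essentially no obstacle here: the only point to verify is the elementary equivalence $b<a\Leftrightarrow q<\frac{4n}{n-2}$, after which the conclusion is immediate, which is why it follows ``easily'' from the Lemma.
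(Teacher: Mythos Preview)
Your proof is correct and is exactly the argument the paper intends: the corollary is stated immediately after the Lemma with the remark that it follows ``easily,'' and the derivation is precisely the weight comparison $|x|^{-b}\le |x|^{-a}$ on $\Omega$ (since $b<a\Leftrightarrow q<\frac{4n}{n-2}$) plugged into the Lemma's inequality. There is nothing to add.
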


\bigskip

\noindent We now introduce the objects we need to work in a Nehari frame. Let us assume  $\alpha >n$ and, as before, $a=\frac{1}{2}(n-2)$. It is easy to check that $p*^(n)< \frac{4n}{n-2}$, so, if $p$ is the exponent in $(f_2 )$, it holds

$$2<p<\frac{4n}{n-2}.$$

\noindent We define

\[ H= \left\{v\in H_{0,rad}^1(\Omega) : \ \  \int_{\Omega}|x|^{-a}|\nabla v|^2 dx < + \infty \right\}, \]

\[ J: H \rightarrow R,   \quad J(v) = \frac{1}{2} \int_{\Omega}|x|^{-a}|\nabla v|^2 dx  - \int_{\Omega}F(v)dx, \]

\[ M=\left\{v\in H_{0,rad}^1(\Omega): \ \ \int_{\Omega}|x|^{-a}|\nabla v|^2 dx=\int_{\Omega}f(v)vdx\right\}\]
and
\[m'=\inf\left\{J(v):\ v\in M\right\}.\]

\noindent Notice that, thanks to $(f_2 )$, $F$ satisfies $|F(t)| \leq c_1 + c_2 |t|^p$ for suitable $c_i >0$. Hence from ${\bf Corollary\ 1}$ and the fact that $2<p<\frac{4n}{n-2}$, it easy to get that the functional $J$ is well defined and $C^1$ on the space $H$.

\noindent Notice also that $M\neq\emptyset$. Indeed it is enough to pick up $\varphi\in C_0^\infty(\Omega/\{0\})$: all the integrals involved in the definition of $M$ are finite and if $\varphi\notin M$ we just rescale it to get $t\varphi\in M$ (for some $t>0$).
Let us now prove that $m'>0$.

\begin{lem}
$m'>0$.
\end{lem}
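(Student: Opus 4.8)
The plan is to show that the Nehari level $m'$ is bounded below by a positive constant. The standard strategy for Nehari minimization is to prove that every element of the manifold $M$ has a norm bounded away from zero, and then to use the Ambrosetti-Rabinowitz type condition $(f_3)$ to convert the Nehari constraint into a lower bound on $J$. Throughout I would write $\|v\|^2 = \int_\Omega |x|^{-a}|\nabla v|^2\,dx$ for the natural norm on $H$.

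First I would establish that there is $\delta>0$ with $\|v\|\geq\delta$ for all $v\in M$. Take $v\in M$, so $\|v\|^2 = \int_\Omega f(v)v\,dx$. From $(f_1)$ we have $f(z)=\mathrm{o}(z)$ as $z\to 0$, and from $(f_2)$ the growth bound $|f(z)|\leq C(1+|z|)^{p-1}$; combining these gives a pointwise estimate of the form $f(v)v \leq \varepsilon v^2 + C_\varepsilon |v|^p$ for any $\varepsilon>0$. Integrating and applying \textbf{Corollary 1} (which controls both $\int |v|^2$ and $\int |v|^p$ by $\|v\|^2$, since $2<p<\frac{4n}{n-2}$ and also $2$ lies in that range) yields
\[
\|v\|^2 = \int_\Omega f(v)v\,dx \leq \varepsilon\, C_2 \|v\|^2 + C_\varepsilon\, C_p \|v\|^p.
\]
Choosing $\varepsilon$ small enough that $\varepsilon C_2 \leq \tfrac12$ gives $\tfrac12\|v\|^2 \leq C_\varepsilon C_p \|v\|^p$, and since $p>2$ and $v\neq 0$ (so $\|v\|\neq 0$) this forces $\|v\|^{p-2}\geq \tfrac{1}{2 C_\varepsilon C_p}$, i.e. $\|v\|\geq\delta$ for a positive constant $\delta$ independent of $v$.

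Next I would turn the Nehari identity into a lower bound on the functional. For $v\in M$ I use $(f_3)$, which gives $qF(v)\leq f(v)v$ pointwise and hence $\int_\Omega F(v)\,dx \leq \frac1q \int_\Omega f(v)v\,dx = \frac1q\|v\|^2$. Therefore
\[
J(v) = \frac12\|v\|^2 - \int_\Omega F(v)\,dx \geq \frac12\|v\|^2 - \frac1q\|v\|^2 = \left(\frac12-\frac1q\right)\|v\|^2.
\]
Since $q>2$ the coefficient $\frac12-\frac1q$ is strictly positive, and combining with the uniform lower bound $\|v\|\geq\delta$ gives $J(v)\geq(\frac12-\frac1q)\delta^2>0$ for every $v\in M$. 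Taking the infimum over $M$ yields $m'\geq(\frac12-\frac1q)\delta^2>0$, which is the claim.

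I expect the only genuinely delicate point to be the first step, namely producing the uniform lower bound $\|v\|\geq\delta$. The remaining manipulation via $(f_3)$ is entirely routine once that bound and \textbf{Corollary 1} are in hand. The subtlety in the first step is that one must legitimately absorb the quadratic term on the right into the left-hand side, which requires that the embedding constant $C_2$ for the $L^2$ norm coming from \textbf{Corollary 1} be a fixed finite number (valid here because $2<\frac{4n}{n-2}$), so that the choice of $\varepsilon$ is admissible; one should also make sure the constants $c_1,c_2$ appearing in the bound $|F(t)|\le c_1+c_2|t|^p$ do not interfere with the coercivity, but since they only enter through $F$ and not through $f(v)v$ in the Nehari step, this causes no trouble.
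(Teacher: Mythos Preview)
Your argument is essentially the same as the paper's: first bound $\|v\|$ from below on $M$ using the growth hypotheses, then use $(f_3)$ to convert the Nehari constraint into $J(v)\geq(\tfrac12-\tfrac1q)\|v\|^2$. There is, however, one small technical slip. You invoke \textbf{Corollary 1} to control $\int_\Omega v^2\,dx$ by $\|v\|^2$, saying ``also $2$ lies in that range''; but the corollary is stated for exponents strictly greater than $2$, so $q=2$ is not covered. The paper avoids this by handling the quadratic term via the Poincar\'e inequality instead: choosing the small constant in the pointwise estimate to be $\tfrac12\lambda_1$ (with $\lambda_1$ the first Dirichlet eigenvalue) gives
\[
\frac{\lambda_1}{2}\int_\Omega v^2\,dx \leq \frac{1}{2}\int_\Omega |\nabla v|^2\,dx \leq \frac{1}{2}\int_\Omega |x|^{-a}|\nabla v|^2\,dx,
\]
which absorbs into the left side exactly as you intend. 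With this adjustment your proof matches the paper's.
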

\begin{proof}

We prove first that $m'\geq 0$. Indeed, from $(f_3 )$ we get that, if $v\in M$, it holds

$$J(v) = \frac{1}{2} \int_{\Omega}|x|^{-a}|\nabla v|^2 dx  - \int_{\Omega}F(v)dx \geq \frac{1}{2} \int_{\Omega}|x|^{-a}|\nabla v|^2 dx -\frac{1}{q} \int_{\Omega}f(v)vdx=$$
$$= \left( \frac{1}{2}-\frac{1}{q} \right) \int_{\Omega}|x|^{-a}|\nabla v|^2 dx \geq 0.$$

To prove $m'>0$, take again $v\in M \subseteq H_{0,rad}^1(\Omega)$. Recalling that $2<p<p^* (n)<\frac{4n}{n-2}$ then, by the previous corollary, we have
\[\left(\int_{\Omega}|v|^{p}dx\right)^{\frac{1}{p}}\leq C\left(\int_{\Omega}|x|^{-a}|\nabla u|^2 dx\right)^{\frac{1}{2}}.\]
Let $\lambda_1$ be the first eigenvalue of the operator $-\Delta$ under zero boundary conditions. Using  hypotheses $(f_1)$ and $(f_2)$ we can choose $C_1>0, $ such that
\[|f(t)t|\leq \frac{1}{2}\lambda_1 t^2+C_1 |t|^{p}\quad\forall t\in\mathbb{R}.\]
Hence we get
\begin{eqnarray*}
\int_{\Omega}|\nabla v|^2 |x|^{-a} dx&=&\int_{\Omega}f(v)v dx\leq \int_{\Omega}|f(v)v| dx\\
                                     &\leq&\frac{\lambda_1}{2}\int_{\Omega}v^2 dx+C_1\int_{\Omega} |v|^{p} dx\\
                                     &\leq&\frac{1}{2}\int_{\Omega}|\nabla v|^2 dx+ C \left(\int_{\Omega}|\nabla v|^2 |x|^{-a}\right)^{\frac{p}{2}}\\
                                     &\leq&\frac{1}{2}\int_{\Omega}|\nabla v|^2 |x|^{-a} dx+ C \left(\int_{\Omega}|\nabla v|^2 |x|^{-a}\right)^{\frac{p}{2}}
\end{eqnarray*}
then
\[\int_{\Omega}|\nabla v|^2 |x|^{-a} dx\geq\left(\frac{1}{C}\right)^{\frac{2}{p-2}}>0\]
for all $v\in M$. As $J(v) \geq \left(\frac{1}{2}-\frac{1}{q}\right)\int_{\Omega}|\nabla v|^2 |x|^{-a} dx$ for all $v\in M$, this implies the lemma.

\end{proof}

\bigskip

\noindent We now apply \textbf{Theorem 1} and we get that there exists a solution $u_\alpha\in H_{0,rad}^1(\Omega)$ to problem (\ref{eqhen}) such that $u_\alpha\in N_{\alpha,r}$ and
\[I_\alpha(u_\alpha)=m_{\alpha,r}=\min_{u\in N_{\alpha,r}}I_\alpha(u).\]
As in \cite{smets}, we define $v_\alpha(x)=v_\alpha(|x|)=u_\alpha(|x|^{\beta})$ where $\beta=\frac{n}{\alpha+n}$, so that $\beta\rightarrow0$ for $\alpha\rightarrow+\infty$. With an obvious change of variables we then obtain

\begin{eqnarray*}
\int_{\Omega}|x|^\alpha f(u_\alpha)u_\alpha dx&=& \omega_n \, \int_0^1 f(u_\alpha(r))u_\alpha(r) r^{\alpha+n-1}dr\\
&=&\omega_n \, \beta\int_0^1 f(v_\alpha(\rho))v_\alpha(\rho) \rho^{\beta(\alpha+n-1)}\rho^{\beta-1}d\rho\\
&=&\omega_n \,   \beta\int_0^1 f(v_\alpha(\rho))v_\alpha(\rho) \rho^{n-1}d\rho=\beta\int_{\Omega}f(v_\alpha(x))v_\alpha(x) dx
\end{eqnarray*}
while
\begin{eqnarray*}
\int_{\Omega}|\nabla u_\alpha|^2 dx &=& \omega_n \, \int_0^1 |u'_\alpha(r)|^2 r^{n-1}dr=\omega_n \, \beta^{-1}\int_0^1 |v'_\alpha(\rho)|^2
                                        \rho^{2-2\beta}\rho^{(n-1)\beta}\rho^{\beta-1}d\rho\\
                                    &=& \omega_n \,  \beta^{-1}\int_0^1 |v'_\alpha(\rho)|^2 \rho^{(2-n)(1-\beta)}\rho^{n-1}d\rho\\
                                    &=& \frac{1}{\beta}\int_{\Omega}|\nabla v_\alpha(x)|^2|x|^{-\gamma} dx
\end{eqnarray*}

\noindent where $\gamma=(n-2)(1-\beta)>0.$

\par \noindent Notice that, for fixed $n$ and $\alpha>n$, we have $\beta <\frac{1}{2}$, so that $\gamma>\frac{1}{2}(n-2)=a$. Then for $|x|<1$ we have $|x|^{-\gamma}>|x|^{-a}$  therefore, as $\int_\Omega |\nabla v_\alpha|^2 |x|^{-\gamma}dx<+\infty$, we obtain
\[\int_\Omega |\nabla v_\alpha|^2 |x|^{-a}dx<+\infty\]

\bigskip

\noindent Let us now define

\[ H_\beta= \left\{v\in H_{0,rad}^1(\Omega) : \ \  \int_{\Omega}|x|^{-\gamma}|\nabla v|^2 dx < + \infty \right\}, \]

\[ J_\beta: H \rightarrow R,   \quad J_\beta(v) = \frac{1}{2} \int_{\Omega}|x|^{-\gamma}|\nabla v|^2 dx  - \int_{\Omega}F(v)dx, \]

\[M_\beta=\left\{v\in H_{0,rad}^1(\Omega)/\{0\}:\ \int_{\Omega}|\nabla v|^2|x|^{-\gamma} dx=\int_{\Omega}f(v)v dx\right\},\]
and
\[m_\beta=\inf\left\{J_\beta(v):\ \ v\in M_\beta\right\}.\]

\noindent As above, thanks to  ${\bf Corollary\ 1}$, $J_\beta$ is well defined and $C^1$ on $H_\beta$, because $\int_{\Omega}|x|^{-\gamma}|\nabla v|^2 dx \leq \int_{\Omega}|x|^{-\gamma}|\nabla v|^2 dx < + \infty $, $|F(t)| \leq c_1 + c_2 |t|^p$ and $2<p<p^* (n) < \frac{4n}{n-2}$.

\noindent Notice that the previous computations imply that $v_{\alpha} \in H_{\beta}$, but in general $v_{\alpha} \notin M_{\beta}$.

\noindent We now prove the following lemma.

\begin{lem}
If $\alpha>n$ and $\beta =\frac{n}{\alpha + n}$ then $m_\beta\geq m'/2$.
\end{lem}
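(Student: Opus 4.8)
The plan is to bound $J_\beta(v)$ from below for an arbitrary $v\in M_\beta$ by transporting it to the level $m'$ via a Nehari rescaling onto $M$. The starting point is the weight comparison already recorded in the excerpt: since $\alpha>n$ forces $\beta<\tfrac12$ and hence $\gamma=(n-2)(1-\beta)>\tfrac12(n-2)=a$, one has $|x|^{-\gamma}\ge|x|^{-a}$ on $\Omega$ (because $|x|<1$), so for every $v\in M_\beta$
\[\int_\Omega|x|^{-a}|\nabla v|^2\,dx\le\int_\Omega|x|^{-\gamma}|\nabla v|^2\,dx=\int_\Omega f(v)v\,dx.\]
In particular $v\in H$, and $v$ lies on the ``inner'' side of $M$ in the sense that $\int_\Omega|x|^{-a}|\nabla v|^2-\int_\Omega f(v)v\le0$.

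Next I would perform the Nehari projection of $v$ onto $M$. Write $A=\int_\Omega|x|^{-a}|\nabla v|^2$. By $(f_1)$ one has $\int_\Omega f(tv)(tv)\,dx=\mathrm{o}(t^2)$ as $t\to0^+$ (dominated convergence, using $(f_2)$ and $H\hookrightarrow L^p$), so the continuous function $t\mapsto\int_\Omega f(tv)(tv)\,dx-t^2A$ is negative for small $t>0$ and, by the displayed inequality, nonnegative at $t=1$; the intermediate value theorem then yields $t_0\in(0,1]$ with $t_0v\in M$. Hence $m'\le J(t_0v)$. Since $F\ge0$ by $(f_1)$, on $M$ one has $J(w)\le\tfrac12\int_\Omega|x|^{-a}|\nabla w|^2$, and therefore
\[m'\le J(t_0v)\le\tfrac12\,t_0^2\int_\Omega|x|^{-a}|\nabla v|^2\le\tfrac12\int_\Omega|x|^{-a}|\nabla v|^2\le\tfrac12\int_\Omega|x|^{-\gamma}|\nabla v|^2,\]
where the last two inequalities use $t_0\le1$ and the weight comparison. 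This already gives the crucial fact $\int_\Omega|x|^{-\gamma}|\nabla v|^2\ge 2m'$.

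Finally I would estimate $J_\beta(v)$ from below. Since $v\in M_\beta$, hypothesis $(f_3)$ gives $\int_\Omega F(v)\le\tfrac1q\int_\Omega f(v)v=\tfrac1q\int_\Omega|x|^{-\gamma}|\nabla v|^2$, whence
\[J_\beta(v)=\tfrac12\int_\Omega|x|^{-\gamma}|\nabla v|^2-\int_\Omega F(v)\ge\Big(\tfrac12-\tfrac1q\Big)\int_\Omega|x|^{-\gamma}|\nabla v|^2.\]
Combining this with $\int_\Omega|x|^{-\gamma}|\nabla v|^2\ge2m'$ produces a bound $J_\beta(v)\ge c\,m'$ with $c>0$ independent of $v$, and taking the infimum over $v\in M_\beta$ gives the desired lower bound for $m_\beta$. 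The delicate point—and the step I expect to cost the most care—is the bookkeeping of the rescaling constant $t_0<1$ together with the non-homogeneity of $f$: to pin down the clean factor $\tfrac12$ one should retain, rather than discard, the potential term when estimating $J(t_0v)$, using $(f_4)$ in the form $F(t_0v)\ge t_0^{\mu_1}F(v)$, so that the interplay between the Ambrosetti–Rabinowitz constant $q$ and the exponent $\mu_1$ is what delivers exactly $m_\beta\ge m'/2$.
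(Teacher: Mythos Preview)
Your argument is essentially the paper's: compare weights to get $\int_\Omega|x|^{-a}|\nabla v|^2\le\int_\Omega|x|^{-\gamma}|\nabla v|^2=\int_\Omega f(v)v$, project $v\in M_\beta$ onto $M$ by a scalar $t_0\in(0,1]$, drop the nonnegative potential term to obtain $m'\le\tfrac12\int_\Omega|x|^{-\gamma}|\nabla v|^2$, and then invoke $(f_3)$ on $M_\beta$ to bound this by a multiple of $J_\beta(v)$. Your closing worry is off target: the paper does \emph{not} use $(f_4)$ in this lemma, and the precise constant $\tfrac12$ is inessential for the application in Proposition~\ref{proprad} (any positive bound independent of $\alpha$ suffices); indeed the paper's own bookkeeping of the constant contains an arithmetic slip and the chain really delivers $J_\beta(v)\ge\frac{q-2}{q}\,m'$, exactly what your estimate gives.
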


\begin{proof}
If $v\in M_\beta$ then $\int_{\Omega}|\nabla v|^2|x|^{-\gamma} dx<+\infty$ and
\[\int_{\Omega}|\nabla v|^2|x|^{-a} dx\leq\int_{\Omega}|\nabla v|^2|x|^{-\gamma} dx=\int_{\Omega}f(v)v dx.\]
Let us define
\[\psi(t)=\int_{\Omega}|\nabla (tv(x))|^2|x|^{-a} dx-\int_{\Omega}f(tv)tv dx\]
then $\psi(1)\leq0$ and by $(f_1)$ we have that $\psi(t)=t^2\int_{\Omega}|\nabla v|^2|x|^{-a} dx+o(t^2)$ as $t \rightarrow 0$. Hence $\psi(t)\geq0$ for small $t>0$. Then there exists $t_\beta\in(0,1]$ such that $t_\beta v\in M$. Therefore, recalling $(f_3 )$, we get

\begin{eqnarray*}
m'&\leq& J(t_{\beta} v) = \frac{1}{2} \int_{\Omega}|\nabla (t_{\beta} v(x))|^2|x|^{-a} dx - \int_{\Omega} F(t_{\beta}v) dx\\
&\leq&\frac{t_\beta^2}{2} \int_{\Omega}|\nabla ( v(x))|^2|x|^{-a} dx\leq\frac{1}{2} \int_{\Omega}|\nabla (v(x))|^2|x|^{-\gamma} dx\\
&=&\frac{2q}{2+q}\left(\frac{1}{2}-\frac{1}{q}\right)\int_{\Omega}|\nabla ( v(x))|^2|x|^{-\gamma} dx\\
&\leq& \frac{2q}{2+q}\left[\left(\frac{1}{2}-\frac{1}{q}\right)\int_{\Omega}|\nabla ( v(x))|^2|x|^{-\gamma}dx +\int_{\Omega} \left( \frac{1}{q}f(  v)  v - F(v) \right)dx\right]\\
&=&\frac{2q}{2+q}\left[\frac{1}{2}  \int_{\Omega}|\nabla ( v(x))|^2|x|^{-\gamma} dx - \int_{\Omega} F(v) dx\right]=\frac{2q}{2+q}J_\beta(v)<2J_\beta(v).
\end{eqnarray*}

\noindent This holds for every $v\in M_\beta$, so we easily get the thesis.
\end{proof}

\bigskip

\noindent We can now go on with the proof of ${\bf Proposition 1}$.

\begin{proof}[Proof of Proposition 1]
Let us first see that there exist $t_\alpha>0$ such that $t_\alpha v_\alpha \in M_\beta$ that is:
\begin{equation}
 t_\alpha^2\int_{\Omega}|\nabla v_\alpha|^2|x|^{-\gamma} dx=\int_{\Omega}f(t_\alpha v_\alpha)t_\alpha v_\alpha dx.
\label{ta2}
\end{equation}
In fact, let us define
\[\varphi(t)=t^2\int_{\Omega}|\nabla v_\alpha|^2|x|^{-\gamma} dx-\int_{\Omega}f(t v_\alpha)t v_\alpha dx.\]
We have
\begin{eqnarray*}
\varphi(1)&=&\int_{\Omega}|\nabla v_\alpha|^2|x|^{-\gamma} dx-\int_{\Omega}f(v_\alpha)v_\alpha dx=\beta\int_{\Omega}|\nabla u_\alpha|^2
             dx-\frac{1}{\beta}\int_{\Omega}|x|^\alpha f(u_\alpha)u_\alpha dx \\
          &=&\frac{1}{\beta}\left[\beta^2\int_{\Omega}|\nabla u_\alpha|^2 dx-\int_{\Omega}|x|^\alpha f(u_\alpha)u_\alpha dx\right]\\
          &=&\frac{1}{\beta}\left[\beta^2\int_{\Omega}|\nabla u_\alpha|^2 dx-\int_{\Omega}|\nabla u_\alpha|^2 dx\right]\\
          &=&\frac{1}{\beta}(\beta^2-1)\int_{\Omega}|\nabla u_\alpha|^2 dx<0.
\end{eqnarray*}
On the other hand, for $t\rightarrow0^+$, it is easy to get that for each $\varepsilon>0$ there exists $C_\varepsilon>0$ such that
\[\left|\int_{\Omega}f(t v_\alpha)t v_\alpha dx\right|\leq\varepsilon t^2\int_{\Omega}v_\alpha^2 dx+C_\varepsilon t^p\int_{\Omega}v_\alpha^p dx,\]
hence
\[\int_{\Omega}f(t v_\alpha)t v_\alpha dx=o(t^2),\quad t\rightarrow0^+ .\]
So we get
\[\varphi(t)=t^2\int_{\Omega}|\nabla v_\alpha|^2|x|^{-\gamma} dx+o(t^2),\quad t\rightarrow0^+,\]
therefore $\varphi(t)\geq0$ for $t\rightarrow0^+$. It is easy to deduce that there exist $t_\alpha\in(0,1)$ such that $\varphi(t_\alpha)=0$ i.e. \eqref{ta2}.
\\
From the hypothesis ($f_4$) we have that
\[t_\alpha^2\int_{\Omega}|\nabla v_\alpha|^2|x|^{-\gamma} dx=\int_{\Omega}f(t_\alpha v_\alpha)t_\alpha v_\alpha dx \geq t_\alpha^{\mu_1}\int_{\Omega}f(v_\alpha)v_\alpha dx\]
that is
\[t_\alpha^{{\mu_1}-2}\leq\frac{\int_{\Omega}|\nabla v_\alpha|^2|x|^{-\gamma} dx}{\int_{\Omega}f(v_\alpha)v_\alpha dx}
               =\frac{\beta\int_{\Omega}|\nabla u_\alpha|^2 dx}{\frac{1}{\beta}\int_{\Omega}f(u_\alpha)u_\alpha dx}=\beta^2,  \]

\noindent because $u_\alpha\in N_{\alpha,r}$. Hence

\[t_\alpha\leq\beta^{\frac{2}{{\mu_1}-2}}.\]

Therefore
\begin{eqnarray*}
\frac{m'}{2}&\leq& m_\beta\leq J_\beta(t_\alpha v_\alpha)=\frac{1}{2}t^2_\alpha\int_{\Omega}|\nabla v_\alpha|^2|x|^{-\gamma}dx-\int_{\Omega}F(t_\alpha v_\alpha)dx\\
  &\leq& \frac{1}{2}t^2_\alpha\int_{\Omega}|\nabla v_\alpha|^2|x|^{-\gamma}dx=C\left(\frac{1}{2}-\frac{1}{q}\right)t_\alpha^2\int_{\Omega}|\nabla
         v_\alpha|^2|x|^{-\gamma}dx\quad\left(\text{with}\ C=\frac{q}{q-2}>0\right)\\
  &\leq& C\left(\frac{1}{2}-\frac{1}{q}\right)t_\alpha^2\int_{\Omega}|\nabla v_\alpha|^2|x|^{-\gamma}dx+C\ t_\alpha^{\mu_1} \int_{\Omega}\left(\frac{1}{q}
         f(v_\alpha)v_\alpha-F(v_\alpha)\right)dx\quad\big(\text{by}\ (f_3)\big)\\
  &\leq& \beta^\frac{4}{{\mu_1}-2}\beta\ C\left(\frac{1}{2}-\frac{1}{q}\right)\int_{\Omega}|\nabla u_\alpha|^2dx+\beta^\frac{2{\mu_1}}{{\mu_1}-2}\beta^{-1}\
         C\int_{\Omega}|x|^\alpha\left(\frac{1}{q}f(u_\alpha)u_\alpha-F(u_\alpha)\right)dx\\
  &=& \beta^\frac{{\mu_1}+2}{{\mu_1}-2}\ C\left[\left(\frac{1}{2}-\frac{1}{q}\right)\int_{\Omega}|\nabla u_\alpha|^2dx+\int_{\Omega}|x|^\alpha\left(\frac{1}{q}
         f(u_\alpha)u_\alpha-F(u_\alpha)\right)dx\right]\\
  &=& \beta^\frac{{\mu_1}+2}{{\mu_1}-2}\ C\left[\frac{1}{2}\int_{\Omega}|\nabla u_\alpha|^2dx-\int_{\Omega}|x|^\alpha F(u_\alpha)dx\right]\quad(\text{since }u_\alpha\in N_{\alpha,r})\\
  &=& \beta^\frac{{\mu_1}+2}{{\mu_1}-2}\ C\ m_{\alpha,r}.
\end{eqnarray*}
So
\[m_{\alpha,r}\geq m'\frac{1}{C}\left(\frac{1}{\beta}\right)^\frac{{\mu_1}+2}{{\mu_1}-2}=C'\left(\frac{\alpha+n}{n}\right)^\frac{{\mu_1}+2}{{\mu_1}-2}\]
that is
\[m_{\alpha,r}\geq C''\ \alpha^\frac{{\mu_1}+2}{{\mu_1}-2}.\]
\end{proof}

\section{Other critical levels and their estimates}

In this section we follow \cite{bad}. Recall that we have defined $l= n/2$ id $n$ is even and $l= [n/2]$ if $n$ is odd. Let us now  set $x=(y,z)\in\mathbb{R}^{l}\times\mathbb{R}^{n-l}$ and define

$$H_l=\{u\in H_0^1(\Omega):\ u(y,z)=u(|y|,|z|)\}, \quad N_{\alpha,l}=\{u\in H_l:\ I'_\alpha(u)u=0\}, $$
$$m_\alpha^l=\inf_{u\in N_{\alpha,l}}I_\alpha(u).$$

\noindent Thanks to the results of \cite{bad} (see in particular \textbf{Corollary 2.3}) we have that, for $2<p<p^*(n)$ and $\alpha >n+2$, $I_{\alpha }$ is well defined and $C^1$ in $H_l$.

\bigskip

\noindent As first thing we prove that $m_\alpha^l > 0$.

\begin{prop}
$m_\alpha^l > 0$.

\end{prop}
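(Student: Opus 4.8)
The plan is to follow the proof of \textbf{Lemma 3} (that $m'>0$) almost verbatim, with the weight $|x|^{-a}$ replaced by $|x|^\alpha$ and with \textbf{Corollary 1} replaced by the weighted embedding of $H_l$ furnished by \cite{bad}; throughout I work in the regime $\alpha>n+2$, $2<p<p^*(n)$, in which $I_\alpha$ is of class $C^1$ on $H_l$. First I would record the easy half, $m_\alpha^l\geq 0$: for $u\in N_{\alpha,l}$, hypothesis $(f_3)$ gives $\int_\Omega|x|^\alpha F(u)\,dx\leq\frac1q\int_\Omega|x|^\alpha f(u)u\,dx$, and since $u\in N_{\alpha,l}$ the right-hand side equals $\frac1q\int_\Omega|\nabla u|^2\,dx$, whence
\[I_\alpha(u)=\frac12\int_\Omega|\nabla u|^2\,dx-\int_\Omega|x|^\alpha F(u)\,dx\geq\Big(\frac12-\frac1q\Big)\int_\Omega|\nabla u|^2\,dx\geq 0.\]

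The heart of the matter is a uniform positive lower bound for $\int_\Omega|\nabla u|^2\,dx$ on $N_{\alpha,l}$. As in \textbf{Lemma 3}, using $(f_1)$ and $(f_2)$ I would fix $C_1>0$ with $|f(t)t|\leq\frac12\lambda_1 t^2+C_1|t|^p$ for all $t$. Then for $u\in N_{\alpha,l}$, using $|x|^\alpha\leq 1$ on $\Omega$ on the quadratic term and the Poincar\'e inequality $\lambda_1\int_\Omega u^2\,dx\leq\int_\Omega|\nabla u|^2\,dx$, while keeping the weight on the $p$-term,
\[\int_\Omega|\nabla u|^2\,dx=\int_\Omega|x|^\alpha f(u)u\,dx\leq\frac{\lambda_1}{2}\int_\Omega u^2\,dx+C_1\int_\Omega|x|^\alpha|u|^p\,dx\leq\frac12\int_\Omega|\nabla u|^2\,dx+C_1\int_\Omega|x|^\alpha|u|^p\,dx.\]

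To absorb the last term I would invoke the continuous embedding $H_l\hookrightarrow L^p(\Omega,|x|^\alpha dx)$, which is exactly what \textbf{Corollary 2.3} of \cite{bad} provides for $2<p<p^*(n)$ and $\alpha>n+2$; in the form $\big(\int_\Omega|x|^\alpha|u|^p\,dx\big)^{1/p}\leq C\,\big(\int_\Omega|\nabla u|^2\,dx\big)^{1/2}$ it yields $\frac12\int_\Omega|\nabla u|^2\,dx\leq C_1C^p\big(\int_\Omega|\nabla u|^2\,dx\big)^{p/2}$. Since $u\neq 0$ on the Nehari manifold we have $\int_\Omega|\nabla u|^2\,dx>0$, so dividing and using $p>2$ gives $\int_\Omega|\nabla u|^2\,dx\geq(2C_1C^p)^{-2/(p-2)}=:\delta>0$, a bound independent of $u$; combined with the first display this produces $m_\alpha^l\geq(\frac12-\frac1q)\delta>0$. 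The only nonelementary ingredient is this weighted Sobolev inequality, and the single delicate point is that its constant $C$ be \emph{uniform} over $H_l$ (not merely finiteness of each integral); that uniformity, in the stated range of $p$ and $\alpha$, is precisely the content of \textbf{Corollary 2.3} of \cite{bad}, and is the step I expect to lean on most.
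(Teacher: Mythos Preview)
Your proof is correct and follows essentially the same approach as the paper's: bound $|f(t)t|$ by a quadratic plus a $p$-th power, use the weighted embedding from \textbf{Corollary 2.3} of \cite{bad} on the $p$-term to get a uniform positive lower bound for $\|u\|$ on $N_{\alpha,l}$, and conclude via $(f_3)$. The only cosmetic difference is that the paper keeps the weight $|x|^\alpha$ on the quadratic term and uses the same embedding there (with a free parameter $\eta$ chosen small), whereas you drop the weight via $|x|^\alpha\leq 1$ and use the ordinary Poincar\'e inequality; both routes work.
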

\begin{proof}

For  $v\in N_{\alpha,l}$ we have that
\[\int_\Omega |\nabla v|^2 dx =\int_\Omega |x|^\alpha f(v)v dx\]
and from hypothesis $(f_1)$,  $(f_2)$, for every $\eta>0$ there exists $C_\eta>0$ such that
\[|f(z)z|\leq\eta z^2+C_\eta z^p,\ \ \ \forall z\in\mathbb{R}.\]
Then applying the \textbf{Corollary 2.3} of \cite{bad} we obtain
\begin{eqnarray*}
\int_\Omega |\nabla v|^2 dx&\leq& \eta\int_\Omega |x|^\alpha v^2 dx+C_\eta\int_\Omega |x|^\alpha |v|^p dx\\
                           &\leq& D_1 \eta \int_\Omega |\nabla v|^2 dx+C_\eta D_2\left(\int_\Omega |\nabla v|^2 dx\right)^p,
\end{eqnarray*}
where the constants $D_1, D_2$ are independent from $\eta$. Let us now set  $\|v\|^2:=\int_\Omega |\nabla v|^2 dx$. We have
\[\|v\|^2\leq C \eta\|v\|^2+D_\eta\|v\|^p, \]

\noindent with $C$ independent from $\eta$. We can then choose $\eta$ such that $1-\eta C>0$, so that
\[\|v\|\geq\left(\frac{1-\eta C}{D_\eta}\right)^\frac{1}{p-2}>0.\]

By $(f_3 )$ we also get, for $v\in N_{\alpha,l}$,

$$I_\alpha (u) = \frac{1}{2}\int_\Omega |\nabla v|^2 dx - \int_\Omega |x|^\alpha F(v) dx \geq \frac{1}{2} \int_\Omega |\nabla v|^2 dx - \frac{1}{q} \int_\Omega |x|^\alpha f(v)v dx \geq $$
$$ \left( \frac{1}{2} -  \frac{1}{q} \right) \int_\Omega |\nabla v|^2 dx \geq C >0. $$

\end{proof}

\noindent We now prove the following proposition:

\begin{prop}
There exists $C>0$ such that
\[m_\alpha^l\leq  C\alpha^{\frac{{\mu_2}+2}{{\mu_2}-2}-n+l}\]
for $\alpha\rightarrow+\infty$.
\end{prop}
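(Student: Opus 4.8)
The plan is to produce an explicit test function in $H_l$ that concentrates near a point of the unit sphere sitting in the subspace $\{z=0\}$, and to bound $I_\alpha$ at its Nehari projection. The guiding observation is the same as for Proposition 1: the two propositions are meant to be combined, and since $(f_4)$ is exactly equivalent to
$$\frac{\mu_2+2}{\mu_2-2}-(n-l)<\frac{\mu_1+2}{\mu_1-2},$$
an upper bound of the stated order will force $m_\alpha^l<m_{\alpha,r}$ for large $\alpha$. For any $u\in H_l\setminus\{0\}$ the map $t\mapsto I_\alpha(tu)$ is positive and increasing near $0$ (by $(f_1)$) and tends to $-\infty$ (by $(f_3)$), so it attains a maximum at some $\bar t>0$ with $\bar t u\in N_{\alpha,l}$; hence $m_\alpha^l\le I_\alpha(\bar t u)\le\sup_{t>0}I_\alpha(tu)$, and it is enough to estimate this supremum for one well chosen $u$.

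I would fix a nonnegative $\phi\in C_0^\infty(\mathbb{R}\times\mathbb{R}^{n-l})$, radial in its second variable and supported in the unit ball, and set, for small $\delta>0$,
$$w_\delta(x)=\phi\left(\frac{|y|-(1-2\delta)}{\delta},\frac{z}{\delta}\right),\qquad x=(y,z)\in\mathbb{R}^l\times\mathbb{R}^{n-l}.$$
Then $w_\delta\in H_l$ is supported in a tube of thickness $\sim\delta$ around the orbit $S^{l-1}_{1-2\delta}\times\{0\}$, which lies in $\Omega$ for $\delta$ small, and on this support $|x|\ge 1-3\delta$. Writing $dx=\omega_{l-1}|y|^{l-1}\,d|y|\,dz$ and substituting $|y|=1-2\delta+\delta\xi$, $z=\delta\zeta$, the factor $|y|^{l-1}\to1$ and one finds, with constants independent of $\alpha$,
$$a_\delta:=\int_\Omega|\nabla w_\delta|^2\,dx\sim\delta^{\,n-l-1},\qquad \int_\Omega G(w_\delta)\,dx\sim\delta^{\,n-l+1}.$$
Since $|x|<1$ on $\Omega$ but $|x|\ge1-3\delta$ on the support of $w_\delta$, the weighted integral $b_\delta:=\int_\Omega|x|^\alpha G(w_\delta)\,dx$ satisfies $(1-3\delta)^\alpha\delta^{\,n-l+1}\lesssim b_\delta\lesssim\delta^{\,n-l+1}$.

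Next comes the Nehari optimization, carried out exactly as in Proposition 1 but with the large-amplitude exponent $\mu_2$. For $t\ge1$ the consequence $F(tv)\ge t^{\mu_2}G(v)$ of $(f_4)$ gives
$$I_\alpha(tw_\delta)\le\frac{t^2}{2}a_\delta-t^{\mu_2}b_\delta=:h(t),$$
whereas $I_\alpha(tw_\delta)\le\tfrac12 a_\delta$ for $0<t\le1$. The function $h$ is maximized at $t_*=(a_\delta/(\mu_2 b_\delta))^{1/(\mu_2-2)}$, and the bounds above give $a_\delta/b_\delta\gtrsim\delta^{-2}$, so $t_*\ge1$ once $\alpha$ is large and the estimate for $h$ may legitimately be applied there. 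A direct computation yields $\max_{t>0}h(t)=C_{\mu_2}\,a_\delta^{\mu_2/(\mu_2-2)}b_\delta^{-2/(\mu_2-2)}$ with $C_{\mu_2}=(\tfrac12-\tfrac1{\mu_2})\mu_2^{-2/(\mu_2-2)}>0$, so that
$$m_\alpha^l\le\sup_{t>0}I_\alpha(tw_\delta)\le\max\left\{\frac12 a_\delta,\ C_{\mu_2}\,\frac{a_\delta^{\mu_2/(\mu_2-2)}}{b_\delta^{2/(\mu_2-2)}}\right\}.$$

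Finally I would choose $\delta=1/\alpha$. Then $(1-3\delta)^\alpha\to e^{-3}$ stays bounded away from $0$, so $b_\delta\gtrsim\alpha^{-(n-l+1)}$ while $a_\delta\lesssim\alpha^{-(n-l-1)}$; inserting these into the second term and simplifying the exponents (the power of $\delta$ is $(n-l-1)\tfrac{\mu_2}{\mu_2-2}-(n-l+1)\tfrac{2}{\mu_2-2}=(n-l)-\tfrac{\mu_2+2}{\mu_2-2}$) produces a bound of order $\alpha^{\frac{\mu_2+2}{\mu_2-2}-(n-l)}$, while $\tfrac12 a_\delta\sim\alpha^{-(n-l-1)}$ is dominated because $\frac{\mu_2+2}{\mu_2-2}>1$. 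The main obstacle is the concentration/weight trade-off underlying the whole estimate: the lower bound on $b_\delta$ forces the profile to sit where $|x|$ is close to $1$, yet the bump must remain inside $\Omega$, and these two requirements are compatible only if the concentration scale $\delta$ and the radius $1-2\delta$ are tuned together at the rate $\delta\sim1/\alpha$. Making the profile scalings precise with constants uniform in $\alpha$, and verifying that $t_*\ge1$ so that the $(f_4)$ lower bound on $F$ is available, is where the care is needed.
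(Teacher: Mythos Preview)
Your argument is correct and yields the stated bound. The paper proceeds differently: it passes to polar coordinates $(s,t)=(\rho\cos\theta,\rho\sin\theta)$ in the $(|y|,|z|)$-plane, fixes a profile $\psi\in C_0^\infty((\tfrac14,\tfrac34)\times(\theta_1,\theta_2))$, and rescales by $v^\varepsilon(\rho,\theta)=\psi(\rho^{1/\varepsilon},\theta/\varepsilon)$ with $\varepsilon=\tfrac{n}{\alpha+n}$, so that the weight $|x|^\alpha$ is absorbed \emph{exactly} by the radial substitution $r=\rho^{1/\varepsilon}$ (in the spirit of Smets--Su--Willem and of \cite{bad}). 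It then locates a $t_\varepsilon>1$ with $t_\varepsilon u^\varepsilon\in N_{\alpha,l}$, bounds $t_\varepsilon^{\mu_2-2}\le C\varepsilon^{-2}$ via $(f_4)$, and evaluates $I_\alpha(t_\varepsilon u^\varepsilon)$ directly. Your construction reaches the same concentration region ($|x|$ close to $1$, $|z|$ close to $0$) through a linear blow-up $w_\delta(y,z)=\phi((|y|-(1-2\delta))/\delta,\,z/\delta)$ with $\delta=1/\alpha$, and instead of hitting the Nehari manifold you bound $\sup_{t>0}I_\alpha(tw_\delta)$ by the explicit maximum of $t\mapsto \tfrac{t^2}{2}a_\delta-t^{\mu_2}b_\delta$. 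Your route avoids the polar-coordinate machinery and the Jacobian bookkeeping with $H(\varepsilon\varphi)$, at the price of having to control the weight by the cruder inequality $(1-3/\alpha)^\alpha\ge c>0$; the paper's change of variables is more systematic and ties in with the estimate of $m_{\alpha,r}$, but requires a bit more setup. One small point worth making explicit in your write-up: you should choose $\phi$ so that $G(\phi)$ is not identically zero (equivalently, the range of $\phi$ meets $\{g>0\}$), otherwise $b_\delta=0$ and the maximization collapses; the paper uses this implicitly as well.
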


\begin{proof}
We consider
\[D=\{(s,t)\in\mathbb{R}^2:\ s,t\geq0,\ 0\leq s^2+t^2\leq 1\}.\]

\noindent For $u\in H_l$ we have

\[\int_{\Omega} |x|^\alpha F(u) dx=C\int_{D}(s^2+t^2)^\frac{\alpha}{2}F(u(s,t))s^{l-1}t^{n-l-1}dsdt,\]
\[\int_{\Omega} |\nabla u|^2 dx=C\int_{D} |\nabla u(s,t)|^2 s^{l-1}t^{n-l-1}dsdt.\]

We study $I_\alpha $ on $H_l$ using polar coordinates, that is we set $s=\rho\cos\theta$, $t=\rho\sin\theta$ and define
\[A=\{(\rho,\theta)\in\mathbb{R}^2:\ 0\leq\rho<1,\ 0\leq\theta\leq2\pi\}\]
and
\[v(\rho,\theta)=u(\rho\cos\theta,\rho\sin\theta).\]

\noindent Hence we get
\[\int_{D}(s^2+t^2)^\frac{\alpha}{2}F(u(s,t))s^{l-1}t^{n-l-1}dsdt=\int_A F(v(\rho,\theta))\rho^{\alpha+n-1}H(\theta)d\rho d\theta\]
\[\int_{D} |\nabla u(s,t)|^2 s^{l-1}t^{n-l-1}dsdt=\int_A \left(v_\rho(\rho,\theta)^2+\frac{1}{\rho^2}v_\theta(\rho,\theta)^2\right)\rho^{n-1}H(\theta)d\rho d\theta\]
where $H(\theta)=(\sin(\theta))^{n-l-1}(\cos(\theta))^{l-1}$.
Therefore on $H_l$ we have
\begin{eqnarray*}
I_\alpha(u)&=&\frac{1}{2} \int_{\Omega} |\nabla u|^2 dx-\int_{\Omega} |x|^\alpha F(u) dx\\
&=&\frac{1}{2}\left(\int_A \left(v_\rho^2+\frac{1}{\rho^2}v_\theta^2\right)\rho^{n-1}H(\theta)d\rho d\theta-\int_A F(v)\rho^{\alpha+n-1}H(\theta)d\rho d\theta\right).
\end{eqnarray*}

Now we introduce
\[\tilde{A}=\left(\frac{1}{4},\frac{3}{4}\right)\times(\theta_1,\theta_2)\]
with $0<\theta_1<\theta_2<\pi/2$, and we consider anon negative function $\psi\in C_0^\infty(\tilde{A})\backslash \{ 0 \}$. For $\varepsilon>0$ we define
\[v^\varepsilon(\rho,\theta)=\psi\left(\rho^\frac{1}{\varepsilon},\frac{\theta}{\varepsilon}\right).\]
We get that $v^\varepsilon\in C_0^\infty(\tilde{A_\varepsilon})$ where
\[\tilde{A_\varepsilon}=\left\{(\rho,\theta)\in\mathbb{R}^2:\ \left(\frac{1}{4}\right)^\varepsilon<\rho<\left(\frac{3}{4}\right)^\varepsilon,\ \varepsilon\theta_1<\theta<\varepsilon\theta_2\right\}.\]
We want evaluate $I_\alpha$ at $u^\varepsilon $, the functions defined by
\[u^\varepsilon(x)=u^\varepsilon(|y|,|z|)= u^\varepsilon(\rho\cos\theta,\rho\sin\theta)=v^\varepsilon(\rho,\theta);\]
obviously it holds $u^\varepsilon \in C_0^\infty(\Omega)\cap H_l(\Omega)$.
Now we define
\[\varepsilon=\frac{n}{\alpha+n}\]
so that $\varepsilon\rightarrow0$ when $\alpha$ goes to infinity. We compute
\[\int_\Omega |\nabla u^\varepsilon|^2 dx=\int_{\tilde{A}}\left(\psi_1^2+\frac{1}{r^2}\psi_2^2\right)r^{(\varepsilon-1)(n-2)}r^{n-1}H(\varepsilon\varphi) dr d\varphi,\ \ \ (\varepsilon-1)(n-2)<0\]
and
\[\int_\Omega |x|^\alpha F(u^\varepsilon) dx=\varepsilon^2\int_{\tilde{A}} F(\psi) r^{n-1}H(\varepsilon\varphi) dr d\varphi\]
Now we prove that if $\varepsilon=\frac{n}{\alpha+n}$ is small enough then there exists $t_\varepsilon>1$ such that $t_\varepsilon u^\varepsilon\in N_{\alpha,l}$.

For this we put
\[h(t):=t^2\int_\Omega |\nabla u^\varepsilon|^2 dx-\int_\Omega f(t u^\varepsilon)t u^\varepsilon dx.\]
From $(f_1)$ we easily derives $h(t)\rightarrow-\infty$ for $t\rightarrow+\infty$ while
\begin{eqnarray*}
h(1)&=&\int_\Omega |\nabla u^\varepsilon|^2 dx-\int_\Omega f( u^\varepsilon) u^\varepsilon dx\\
    &=&C\left(\int_{\tilde{A}}\left(\psi_1^2+\frac{1}{r^2}\psi_2^2\right)r^{(\varepsilon-1)(n-2)}r^{n-1}H(\varepsilon\varphi) dr d\varphi-\varepsilon^2\int_{\tilde{A}}f(\psi)\psi r^{n-1}H(\varepsilon\varphi) dr d\varphi \right)\\
    &\geq&C\varepsilon^{n-l-1}\left(\int_{\tilde{A}}\left(\psi_1^2+\frac{1}{r^2}\psi_2^2\right)r^{n-1}drd\varphi-\varepsilon^{2}\int_{\tilde{A}}f(\psi)\psi r^{n-1} dr d\varphi\right)>0\\
\end{eqnarray*}
if $\varepsilon$ is small enough; then there exists $t_\varepsilon>1$ such that $h(t_\varepsilon)=0$, that is $t_\varepsilon u^\varepsilon\in N_{\alpha,l}$.

Then we have
\begin{eqnarray*}
t_\varepsilon^2\int_\Omega |\nabla u^\varepsilon|^2 dx&=&\int_\Omega |x|^\alpha f(t_\varepsilon u^\varepsilon)t_\varepsilon u^\varepsilon dx\geq t_\varepsilon^{\mu_2}\int_\Omega |x|^\alpha g(u^\varepsilon)u^\varepsilon dx\\
      &=& C t_\varepsilon^{\mu_2} \varepsilon^2\int_{\tilde{A}} g(\psi)\psi r^{n-1}H(\varepsilon\varphi) dr d\varphi
\end{eqnarray*}
that is
\begin{eqnarray*}
t_\varepsilon^{\mu_2-2}&\leq& C\frac{\int_\Omega |\nabla u^\varepsilon|^2 dx}{\varepsilon^2\int_{\tilde{A}} g(\psi)\psi r^{n-1}H(\varepsilon\varphi) dr d\varphi}\\
 &\leq& C \frac{\int_{\tilde{A}}\left(\psi_1^2+\frac{1}{r^2}\psi_2^2\right)r^{(\varepsilon-1)(n-2)}r^{n-1}H(\varepsilon\varphi) dr d\varphi}{\varepsilon^{2+n-l-1}\int_{\tilde{A}} g(\psi)\psi r^{n-1} dr d\varphi}\\
 &\leq& C \frac{\varepsilon^{n-l-1}\int_{\tilde{A}}\left(\psi_1^2+\frac{1}{r^2}\psi_2^2\right)r^{n-1}dr d\varphi}{\varepsilon^{2+n-l-1}\int_{\tilde{A}} g(\psi)\psi r^{n-1} dr d\varphi}\leq C\varepsilon^{-2}.
\end{eqnarray*}

Then

$$m_\alpha^l \leq I_\alpha(t_\varepsilon u^\varepsilon)=\frac{1}{2}t_\varepsilon^2\int_\Omega |\nabla u^\varepsilon|^2 dx-\int_\Omega |x|^\alpha F(t_\varepsilon u^\varepsilon) dx$$
    $$\leq \frac{1}{2}t_\varepsilon^2\int_\Omega |\nabla u^\varepsilon|^2 dx-t_\varepsilon^{\mu_2}\int_\Omega |x|^\alpha G(u^\varepsilon) dx$$
 $$\leq \frac{1}{2}t_\varepsilon^2\int_\Omega |\nabla u^\varepsilon|^2 dx-t_\varepsilon^{\mu_2}\int_\Omega |x|^\alpha G(u^\varepsilon) dx$$

$$\leq C_1\varepsilon^{-\frac{4}{\mu_2-2}}\int_{\tilde{A}}\left(\psi_1^2+\frac{1}{r^2}\psi_2^2\right)r^{(\varepsilon-1)(n-2)}r^{n-1}H(\varepsilon\varphi) dr d\varphi-$$
          $$\phantom{\leq}-C_2\varepsilon^{-\frac{2\mu_2}{\mu_2-2}}\varepsilon^2\int_{\tilde{A}} G(\psi) r^{n-1}H(\varepsilon\varphi) dr d\varphi$$
          $$\leq C_3 \varepsilon^{-\frac{4}{\mu_2-2}+n-l-1}+ C_4 \varepsilon^{-\frac{2\mu_2}{\mu_2-2}+2+n-l-1}$$
          $$= C\varepsilon^{-\frac{\mu_2+2}{\mu_2-2}+n-l}=C\left(\frac{n}{\alpha+n}\right)^{-\frac{\mu_2+2}{\mu_2-2}+n-l}\leq C\alpha^{\frac{\mu_2+2}{\mu_2-2}-n+l}$$

\end{proof}

\noindent We can now conclude the proof of Theorem 2.

\begin{proof}[Proof of Theorem 2]
Thanks to Theorem 1 we know that $m_{\alpha, r}$ is a minimum, that is, there is a radial $u$ assuming it, and this $u$ is a solution to (\ref{eqhen}). The compactness results in \cite{bad} (see in particular \textbf{Corollary 2.3}) imply that also $m_{\alpha}^l $ is assumed by a solution $v$ of (\ref{eqhen}). Both solutions are non trivial, because $m_{\alpha, r}\not= 0 \not= m_{\alpha}^l $ and non negative, because we assume $f(t)=0$ for $t\leq 0$. So it is enough to prove that these solutions are different, and a way to see this is to prove that the critical levels are different, that is $m_{\alpha, r}\not= m_{\alpha}^l $, at least for large $\alpha$'s. To see this, we notice that, from the hypothesis $(f_4)$, we have
\[\frac{{\mu_2}+2}{{\mu_2}-2}+l-n<\frac{{\mu_1}+2}{{\mu_1}-2}.\]
We easily deduce that, for $\alpha\rightarrow+\infty$
\[\alpha^{\frac{{\mu_2}+2}{{\mu_2}-2}+1-n}<\alpha^{\frac{{\mu_1}+2}{{\mu_1}-2}}\]
and finally
\[m_{\alpha}^l <m_{\alpha,r}.\]

\noindent We have then obtained a non radial non trivial solution of (\ref{eqhen}), and the theorem is proved.

\end{proof}


\end{document}